\theoremstyle{plain}
\newtheorem{theorem}{Theorem}[section]
\newtheorem{lemma}[theorem]{Lemma}
\newtheorem{proposition}[theorem]{Proposition}
\theoremstyle{definition}
\newtheorem{problem}[theorem]{Problem}
\begin{document}

\title{A variety of Steiner loops satisfying Moufang's theorem: A solution to Rajah's Problem}

\begin{abstract}
A loop $X$ is said to satisfy Moufang's theorem if for every $x,y,z\in X$ such that $x(yz)=(xy)z$ the subloop generated by $x$, $y$, $z$ is a group. We prove that the variety $V$ of Steiner loops satisfying the identity $(xz)(((xy)z)(yz)) = ((xz)((xy)z))(yz)$ is not contained in the variety of Moufang loops, yet every loop in $V$ satisfies Moufang's theorem. This solves a problem posed by Andrew Rajah.
\end{abstract}

\keywords{Moufang's theorem, Steiner loop, Steiner triple system, Pasch configuration}

\subjclass[2010]{20N05, 05B07}

\author{Ale\v s Dr\'apal}
\address[Dr\'apal]{Department of Mathematics, Charles University, Sokolovsk\'a 83, 186 75, Praha 8, Czech Republic}
\email{drapal@karlin.mff.cuni.cz}

\author{Petr Vojt\v echovsk\' y}
\address[Vojt\v{e}chovsk\'y]{Department of Mathematics, University of Denver, 2390 S.~York St, Denver, Colorado, 80208, USA}
\email{petr@math.du.edu}

\maketitle

\section{Introduction}

A \emph{Moufang loop} is a loop satisfying the identity $x(y(xz)) = ((xy)x)z$. Moufang's theorem \cite{Moufang} states that in every Moufang loop $X$ the implication
\begin{equation}\label{Eq:Rajah}
    x(yz) = (xy)z\quad\Longrightarrow\quad\langle x, y, z\rangle \text{ is a group}
\end{equation}
holds for every $x,y,z\in X$. A short proof of Moufang's theorem can be found in \cite{Drapal}.

We say that a loop $X$ \emph{satisfies Moufang's theorem} if \eqref{Eq:Rajah} holds for every $x,y,z\in X$, and a class $\mathcal X$ of loops \emph{satisfies Moufang's theorem} if every $X\in\mathcal X$ satisfies Moufang's theorem.

In 2011, Andrew Rajah asked \cite{web}: \emph{Is there a variety of loops not contained in the variety of Moufang loops that satisfies Moufang's theorem?} In this short note we give an affirmative answer to Rajah's question. 

Some \emph{classes} of non-Moufang loops satisfying Moufang's theorem are known. In \cite{CEtAl}, Colbourn et al observed that a Steiner loop satisfies Moufang's theorem if and only if its corresponding Steiner triple system has the property that every Pasch configuration generates a subsystem of order $7$, i.e., a Fano plane. They also determined the spectrum of finite orders for which there exist non-Moufang Steiner loops satisfying Moufang's theorem. In \cite{Stuhl}, Stuhl proved that all oriented Hall loops of exponent $4$ satisfy Moufang's theorem. Our solution follows a similar line of reasoning. Marina Rasskazova recently announced an independent solution \cite{Rasskazova}.

Recall that a \emph{Steiner loop} \cite{ColbournRosa} is a loop satisfying the identities
\begin{equation}\label{Eq:Steiner}
    xy=yx\quad\text{and}\quad x(xy)=y.
\end{equation}
A \emph{Steiner quasigroup} is a quasigroup satisfying the identities
\begin{equation}\label{Eq:SQ}
    xy=yx,\quad x(xy)=y\quad\text{and}\quad xx=x.
\end{equation}
There is a one-to-one correspondence between Steiner loops and Steiner quasigroups. Given a Steiner loop $X$ with identity element $1$, the corresponding Steiner quasigroup is obtained by removing $1$ and setting $xx=x$ for every $x\in X\setminus\{1\}$. Conversely, given a Steiner quasigroup $X$, the corresponding Steiner loop is obtained by adjoining a new element $1$ and setting $x1 = 1x = x$ for every $x\in X\cup\{1\}$.

A \emph{Steiner triple system} is a partition of the edges of a complete graph into edge-disjoint triangles, with the vertices called \emph{points} and the triangles called \emph{blocks}. There is a one-to-one correspondence between Steiner triple systems and Steiner quasigroups. Given a Steiner triple system $S$ on $X$, the corresponding Steiner quasigroup $(X,\cdot)$ is defined by setting $xy=z$ if $\{x,y,z\}$ is a block of $S$, and $xx=x$ for every $x\in X$. Conversely, given a Steiner quasigroup $(X,\cdot)$, the corresponding Steiner triple system $STS(X)$ is obtained by declaring $\{x,y,z\}$ to be a block whenever $x\ne y$ satisfy $xy=z$.

\begin{figure}
\begin{center}
\begin{tikzpicture}
[scale = 2.0, point/.style={circle,fill=gray!20,inner sep=0pt,minimum size=7mm},]
    \node at (0,1.5) {Pasch configuration};
    \node[point] (cc) at (0,0) {$c$};
    \node[point] (bl) at (-0.866,-0.5) {$x$};
    \node[point] (br) at (0.866,-0.5) {$z$};
    \node[point] (cl) at (-0.433,0.25) {$xy$};
    \node[point] (cr) at (0.433,0.25) {$yz$};
    \node[point] (tc) at (0,1) {$y$};
    \draw (tc)--(cl);
    \draw (cl)--(bl);
    \draw (tc)--(cr);
    \draw (cr)--(br);
    \draw (cl)--(cc);
    \draw (cc)--(br);
    \draw (bl)--(cc);
    \draw (cc)--(cr);
    \node at (3,1.5) {Fano plane};
    \node[point] (cc) at (3,0) {$c$};
    \node[point] (bl) at (2.134,-0.5) {$x$};
    \node[point] (bc) at (3,-0.5) {$xz$};
    \node[point] (br) at (3.866,-0.5) {$z$};
    \node[point] (cl) at (2.567,0.25) {$xy$};
    \node[point] (cr) at (3.433,0.25) {$yz$};
    \node[point] (tc) at (3,1) {$y$};
    \draw (cl) to [bend left=38] (cr);
    \draw (cr) to [bend left=38] (bc);
    \draw (bc) to [bend left=38] (cl);
    \draw (bl)--(bc);
    \draw (bc)--(br);
    \draw (tc)--(cl);
    \draw (cl)--(bl);
    \draw (tc)--(cr);
    \draw (cr)--(br);
    \draw (bc)--(cc);
    \draw (cc)--(tc);
    \draw (cl)--(cc);
    \draw (cc)--(br);
    \draw (bl)--(cc);
    \draw (cc)--(cr);
\end{tikzpicture}
\end{center}
\caption{Pasch configuration and Fano plane in a Steiner triple system.}\label{Fg:Pasch}
\end{figure}
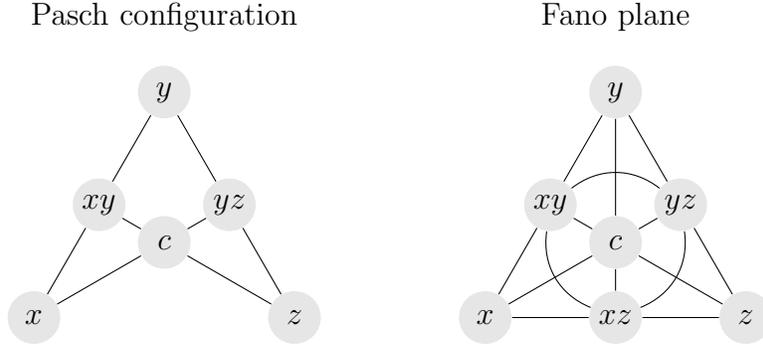

Let $X$ be a Steiner quasigroup and $S=STS(X)$. Suppose that $x,y,z\in X$ are three points not contained in a block of $S$. Then $x(yz)=(xy)z$ holds if and only if the three points
$x$, $y$, $z$ give rise to the \emph{Pasch configuration} in Figure \ref{Fg:Pasch}, with $c=x(yz)=(xy)z$. There is, of course, also the block $\{x,z,xz\}$, which is usually not depicted in a Pasch configuration. If, in addition, $x(yz)=y(xz)$ and $(xy)(yz)=xz$, then the three points $x$, $y$, $z$ give rise to the \emph{Fano plane} in Figure \ref{Fg:Pasch}. Note that both $x(yz)=y(xz)$ and $(xy)(yz)=xz$ can be interpreted as instances of associativity in Steiner quasigroups, namely, $(yz)x = x(yz) = y(xz) = y(zx)$ and $(xy)(yz) = xz = x(y(yz))$.

Consider now the corresponding Steiner loop on $X\cup\{1\}$. If $x,y\in X$ are such that $x\ne y$ then $\langle x,y\rangle$ is a Klein group. If $x,y,z\in X$ are three distinct points that give rise to a Fano plane then $\langle x, y, z\rangle$ is an elementary abelian $2$-group of order $8$.


\section{Solution to Rajah's problem}

\begin{proposition}\label{Pr:Fano}
A Steiner loop $X$ satisfies Moufang's theorem if and only if
\begin{equation}\label{Eq:Good}
    x(yz) = (xy)z\quad\Longrightarrow\quad x(yz) = y(xz)
\end{equation}
holds for every $x,y,z\in X$.
\end{proposition}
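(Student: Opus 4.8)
The plan is to treat the two implications separately; the forward direction is essentially a triviality, so the content lies in the reverse implication, which I will prove by showing that \eqref{Eq:Good} forces every Pasch configuration of $S=STS(X)$ to generate a Fano plane. For the forward direction, suppose $X$ satisfies Moufang's theorem and $x(yz)=(xy)z$. Then $\langle x,y,z\rangle$ is a group, and being a subloop of a Steiner loop it satisfies \eqref{Eq:Steiner}; hence it is commutative of exponent $2$, i.e.\ an elementary abelian $2$-group, in which the product of $x$, $y$, $z$ does not depend on bracketing or order. Therefore $x(yz)=y(xz)$, so \eqref{Eq:Good} holds.

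For the reverse direction, assume \eqref{Eq:Good} holds throughout $X$ and let $x(yz)=(xy)z=:c$. If one of $x,y,z$ equals $1$, or two of them coincide, or $\{x,y,z\}$ is a block of $S$, then $\langle x,y,z\rangle$ is generated in $X$ by at most two non-identity elements, or is a Klein group, hence is a group; so assume $x,y,z$ are distinct, different from $1$, and lie on no common block. By the discussion in the introduction, $x,y,z$ then determine the Pasch configuration of Figure~\ref{Fg:Pasch}, carrying the blocks $\{x,y,xy\}$, $\{y,z,yz\}$, $\{xy,z,c\}$, $\{x,yz,c\}$ and the (undrawn) block $\{x,z,xz\}$; a routine check, using only that two distinct blocks of a Steiner triple system share at most one point, shows that $x,y,z,xy,yz,xz,c$ are seven distinct points. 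It remains to adjoin the two further blocks $\{y,xz,c\}$ and $\{xy,yz,xz\}$ that complete these seven points into a Fano plane, for then there are seven distinct blocks covering all $\binom{7}{2}=21$ pairs among seven points, so the point set is closed and equals the subsystem generated by $x,y,z$, and hence $\langle x,y,z\rangle$ in $X$ is the elementary abelian group of order $8$, a group.

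The block $\{y,xz,c\}$ is immediate from applying \eqref{Eq:Good} to $(x,y,z)$, which gives $c=x(yz)=y(xz)$. For $\{xy,yz,xz\}$, the key observation is that the triple $(z,xy,x)$ associates automatically in the presence of the Pasch configuration: using $\{x,y,xy\}$ we get $z\bigl((xy)x\bigr)=zy=yz$, and using $\{xy,z,c\}$ and $\{x,yz,c\}$ we get $\bigl(z(xy)\bigr)x=cx=yz$. Applying \eqref{Eq:Good} with $(z,xy,x)$ in place of $(x,y,z)$ then yields $yz=z\bigl((xy)x\bigr)=(xy)(zx)=(xy)(xz)$, which says precisely that $\{xy,yz,xz\}$ is a block. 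I expect this to be the main obstacle: spotting the right ``free'' instance of associativity, namely $(z,xy,x)$, whose image under \eqref{Eq:Good} produces the last missing block. The remainder is bookkeeping about the Pasch configuration — chiefly verifying that $xy$, $yz$, $xz$, $c$ are pairwise distinct and distinct from $x,y,z$ (whence the seven blocks are distinct), each instance reducing to the impossibility of two distinct blocks sharing two points.
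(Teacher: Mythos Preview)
Your proof is correct and follows the same overall strategy as the paper: handle degenerate cases, then show that \eqref{Eq:Good} forces the two missing Fano blocks $\{y,xz,c\}$ and $\{xy,yz,xz\}$ onto any Pasch configuration. The one substantive difference is the auxiliary triple used to obtain $\{xy,yz,xz\}$. The paper takes $(u,v,w)=(x,xy,(xy)z)$ and, in order to verify that this triple associates, must first invoke the block $\{y,xz,c\}$ coming from the initial application of \eqref{Eq:Good}; the two applications are therefore chained. Your choice $(z,xy,x)$ is checked to associate using only the four Pasch blocks and the Steiner identities, so your two applications of \eqref{Eq:Good} are logically independent. This is a mild simplification; otherwise the arguments coincide, including the appeal to the introductory remarks that a Fano subsystem yields an elementary abelian subloop of order $8$.
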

\begin{proof}
If $X$ is a Steiner loop satisfying Moufang's theorem and $x(yz)=(xy)z$ for some $x,y,z\in X$, then $\langle x,y,z\rangle$ is a commutative group and hence $x(yz)=y(xz)$. Conversely, suppose that $X$ is a Steiner loop satisfying \eqref{Eq:Good} and let $x,y,z\in X$ be such that
\begin{equation}\label{Eq:Ass}
    x(yz)=(xy)z.
\end{equation}
If $1\in\{x,y,z\}$ or $\{x,y,z\}$ is contained in a block then $\langle x,y,z\rangle$ is a group. For the rest of the proof suppose that $x$, $y$, $z$ are distinct non-identity elements not contained in a block so that the three points $x$, $y$, $z$ form a Pasch configuration as in Figure \ref{Fg:Pasch}. By \eqref{Eq:Good}, $x(yz) = y(xz)$ and $\{y,xz,x(yz)\}$ is a block. Furthermore, with $u=x$, $v=xy$ and $w=(xy)z$ we have
\begin{displaymath}
   u(vw) = xz = y(y(xz)) \stackrel{\eqref{Eq:Good}}{=} y(x(yz)) = (x(xy))(x(yz)) \stackrel{\eqref{Eq:Ass}}{=} (x(xy))((xy)z)) = (uv)w.
\end{displaymath}
Applying \eqref{Eq:Good} with $(u,v,w)$ in place of $(x,y,z)$, we obtain
\begin{displaymath}
    xz = u(vw) = v(uw) = (xy)(x((xy)z)) \stackrel{\eqref{Eq:Ass}}{=} (xy)(x(x(yz))) = (xy)(yz).
\end{displaymath}
This shows that $\{xy,yz,xz\}$ is a block and thus $\langle x,y,z\rangle$ is a group.
\end{proof}

\begin{lemma}\label{Lm:Id}
Any Steiner loop satisfying the identity
\begin{equation}\label{Eq:Id}
    (xz)(((xy)z)(yz)) = ((xz)((xy)z))(yz)
\end{equation}
satisfies Moufang's theorem.
\end{lemma}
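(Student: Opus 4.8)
The plan is to invoke Proposition \ref{Pr:Fano} and reduce the statement to checking that the implication \eqref{Eq:Good} holds in any Steiner loop $X$ satisfying \eqref{Eq:Id}. So I would take $x,y,z\in X$ with $x(yz)=(xy)z$, set $c=x(yz)=(xy)z$, and try to show $x(yz)=y(xz)$. The degenerate cases — when $1\in\{x,y,z\}$, or two of $x,y,z$ coincide, or $\{x,y,z\}$ lies in a block — are disposed of at once, since then $\langle x,y,z\rangle$ is a commutative group (as already noted in the proof of Proposition \ref{Pr:Fano}). In the remaining case $x,y,z$ are distinct non-identity elements not in a common block, so by the discussion before Proposition \ref{Pr:Fano} the relation $x(yz)=(xy)z$ turns $x,y,z$ into a Pasch configuration: the triples
\[
   \{x,y,xy\},\quad\{y,z,yz\},\quad\{x,z,xz\},\quad\{xy,z,c\},\quad\{x,yz,c\}
\]
are blocks of $STS(X)$ and $x,y,z,xy,yz,c$ are six distinct non-identity elements.

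The crucial move is to instantiate \eqref{Eq:Id} at this triple $(x,y,z)$ and substitute $(xy)z=c$ everywhere it occurs; since no bare occurrence of $xy$ survives, \eqref{Eq:Id} collapses to
\[
   (xz)\bigl(c(yz)\bigr)=\bigl((xz)c\bigr)(yz).
\]
Now I would simplify the left-hand side: $c(yz)=x$ because $\{x,yz,c\}$ is a block, and then $(xz)x=z$ because $\{x,z,xz\}$ is a block, so the left-hand side is just $z$. Writing $d=(xz)c$, this reads $z=d(yz)$; left-multiplying by $d$ and using $d\bigl(d(yz)\bigr)=yz$ gives $dz=yz$, hence $d=y$ by cancellation. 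Thus $(xz)c=y$, so $\{xz,c,y\}$ is a block, and reading off the product of $y$ and $xz$ in it gives $y(xz)=c=x(yz)$. This verifies \eqref{Eq:Good}, and Proposition \ref{Pr:Fano} finishes the proof.

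The computation itself is short; the part I expect to take the most care is the non-degeneracy bookkeeping needed to legitimize it. One has to check that $x\ne yz$, $x\ne z$, $xz\ne c$, $xz\ne y$ and $c\ne y$, so that each of the triples above really is a block (and no collapse such as $aa=a=1$ sneaks in), and that $x,y,z,xy,yz,c$ are genuinely distinct. Each such fact should drop out quickly from the Steiner identities $aa=1$ and $a(ab)=b$, from commutativity, from the assumption that $x,y,z$ are distinct non-identity elements not in a common block, and from the fact that two distinct blocks of a Steiner triple system share at most one point — but getting all of these stated cleanly, rather than the displayed identity chase, is where the write-up effort will go.
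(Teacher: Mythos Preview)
Your proof is correct and carries out essentially the same computation as the paper: both instantiate \eqref{Eq:Id}, use $(xy)z=c=x(yz)$ to reduce it to $z=((xz)c)(yz)$, and then cancel to obtain $(xz)c=y$, i.e., $y(xz)=x(yz)$. The paper's write-up is a bit leaner because it runs the whole chain purely from the Steiner identities \eqref{Eq:Steiner}, so no case split or non-degeneracy bookkeeping is needed---each step such as $c(yz)=(x(yz))(yz)=x$ or $(xz)x=z$ holds identically, regardless of whether the elements are distinct or nonidentity.
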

\begin{proof}
Suppose that $X$ is a Steiner loop satisfying \eqref{Eq:Id}. By Proposition \ref{Pr:Fano}, it suffices to check that \eqref{Eq:Good} holds. Let $x,y,z\in X$ be such that \eqref{Eq:Ass} holds. Then
\begin{align*}
    z &= (xz)x = (xz)((x(yz))(yz)) \stackrel{\eqref{Eq:Ass}}{=} (xz)(((xy)z)(yz))\\
      &\stackrel{\eqref{Eq:Id}}{=} ((xz)((xy)z))(yz) \stackrel{\eqref{Eq:Ass}}{=} ((xz)(x(yz)))(yz).
\end{align*}
Multiplying by $yz$ then yields $y=(xz)(x(yz))$, multiplying further by $xz$ yields $(xz)y = x(yz)$, and we obtain $x(yz)=y(xz)$ by commutativity.
\end{proof}

There is a unique Steiner triple system of order $9$, namely the affine triple system of order $9$ with the corresponding Steiner quasigroup $(\mathbb Z_3\times\mathbb Z_3,\cdot)$, $x\cdot y = -x-y$.

\begin{theorem}
Let $V$ be the variety of Steiner loops satisfying the identity \eqref{Eq:Id}. Then $V$ satisfies Moufang's theorem and it is not contained in the variety of Moufang loops.
\end{theorem}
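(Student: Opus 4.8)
The first assertion needs nothing new: every loop in $V$ satisfies \eqref{Eq:Id} by definition, hence satisfies Moufang's theorem by Lemma \ref{Lm:Id}. All the content is therefore in the second assertion, for which it suffices to produce one Steiner loop in $V$ that is not Moufang. I would take $L$ to be the Steiner loop of order $10$ obtained by adjoining an identity element $1$ to the affine Steiner quasigroup $(\mathbb Z_3\times\mathbb Z_3,\cdot)$, $x\cdot y=-x-y$, described just above the theorem.

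Showing that $L$ is not Moufang is the easy half: one evaluates the Moufang identity $x(y(xz))=((xy)x)z$ at a concrete triple of non-identity points of $\mathbb Z_3\times\mathbb Z_3$, for example $(x,y,z)=((1,0),(0,1),(0,0))$, where the left side works out to $(1,1)$ and the right side to $(0,2)$. (Alternatively, in a commutative Moufang loop of exponent $2$ the Moufang law gives $x^2(yz)=(xy)(xz)$, so $yz=(xy)(xz)$; taking $y=ab$, $z=c$ and then $x=a$, and using $a(ab)=a^2b=b$, one gets $(ab)c=b(ac)$, whence associativity by commutativity. Such a loop is thus an elementary abelian $2$-group, and $|L|=10$ is not a power of $2$.)

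The substance of the proof is that $L$ satisfies \eqref{Eq:Id}. Put $p=xz$, $q=(xy)z$ and $r=yz$, so that \eqref{Eq:Id} says exactly that $p$, $q$, $r$ associate in $L$. The decisive observation is the quasigroup identity $(-x-z)\cdot(-y-z)=(x+z)+(y+z)=x+y-z=(x\cdot y)\cdot z$: it shows that whenever $x$, $y$, $z$ are distinct non-identity elements and $\{x,y,z\}$ is not a block of $AG(2,3)$, the elements $p$, $q$, $r$ are distinct and $\{p,q,r\}$ is a block, so that $pq=r$ and hence $(pq)r=r\cdot r=1=p\cdot p=p(qr)$ in the loop, using $c\cdot c=1$ rather than $c\cdot c=c$. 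What remains are the degenerate possibilities: $1\in\{x,y,z\}$, or two of $x,y,z$ are equal, or $\{x,y,z\}$ is a block. In each of these a direct inspection of $p=xz$, $q=(xy)z$, $r=yz$ shows that two of them coincide or one of them equals $1$, so $p$, $q$, $r$ associate trivially. Hence $L$ satisfies \eqref{Eq:Id}, so $L\in V$, completing the proof.

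The only laborious part is running through the degenerate cases, and the one genuine subtlety is remembering that a collinear triple of non-identity elements of a Steiner loop associates because the relevant products collapse to $1$ (exponent $2$), not because of collinearity in the quasigroup. The conceptual heart, which I would flag explicitly, is that the word $(xz,\,(xy)z,\,yz)$ has been engineered so that its three entries always lie on a common line of the affine plane over $\mathbb Z_3$ — exactly enough to force \eqref{Eq:Id} in $L$ while leaving $L$ non-Moufang.
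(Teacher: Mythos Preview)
Your approach is correct and coincides with the paper's: both invoke Lemma \ref{Lm:Id} for the first assertion and take the Steiner loop of order $10$ as the witness for the second, the paper's entire argument for the latter being the single sentence ``It can be checked that the Steiner loop of order $10$ satisfies \eqref{Eq:Id} but is not Moufang.'' You have thus supplied precisely the verification the paper leaves implicit, and your key observation $(xz)(yz)=(xy)z$ in the affine quasigroup is the clean conceptual reason behind it.

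One small slip in the degenerate-case bookkeeping: when $z=1$ with $x,y$ distinct non-identity elements you obtain $p=x$, $q=xy$, $r=y$, which are three \emph{distinct} non-identity elements, so your blanket claim that ``two of them coincide or one of them equals $1$'' fails here. The conclusion is unaffected, since $\{x,xy,y\}$ associates in any Steiner loop ($p(qr)=x\cdot x=1=y\cdot y=(pq)r$), and indeed your main collinearity argument already covers it; just adjust the wording.
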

\begin{proof}
By Lemma \ref{Lm:Id}, every $X\in V$ satisfies Moufang's theorem. It can be checked that the Steiner loop of order $10$ satisfies \eqref{Eq:Id} but is not Moufang.
\end{proof}

Note that the Steiner loop of order $10$ satisfies identities that are not consequences of \eqref{Eq:Steiner} and \eqref{Eq:Id}, for instance the identity $(xy)(y(xz)) = x(y((xy)z))$.

\begin{problem}
Describe the equational theory of the variety of Steiner loops generated by the Steiner loop of order $10$.
\end{problem}

\end{document}